\newtheorem{thm}{Theorem}[section]
\newtheorem{lem}[thm]{Lemma}
\newtheorem{conj}[thm]{Conjecture}
\begin{document}

\baselineskip14pt

\par\vspace{4mm}

\title*{On the Chudnovsky-Seymour-Sullivan Conjecture on Cycles in Triangle-free Digraphs}
\titlerunning {Cycles in Triangle-free Digraphs}
\author{Kevin Chen\inst{1}, Sean Karson\inst{2}, Dan Liu\inst{3}, Jian Shen\inst{4}\thanks{Corresponding author. Shen was partially supported by NSF (CNS 0835834) and Texas Higher Education Coordinating Board (ARP 003615-0039-2007).}}
\authorrunning{Chen, Karson, Liu, and Shen}
\institute{Clements High School, Sugar Land, TX 77479   \and
Trinity Preparatory School, Winter Park, FL 32792 \and
Liberal Arts and Science Academy, Austin, TX 78724
\and Department of
Mathematics, Texas State
University, San Marcos, TX 78666\\
\texttt{js48@txstate.edu}}
\maketitle

\begin{abstract}
For a simple digraph $G$ without directed triangles or digons, let $\beta(G)$ be the size of the smallest subset $X \subseteq E(G)$ such that $G\setminus X$ has
no directed cycles, and let $\gamma(G)$ be the number of unordered pairs of nonadjacent vertices in $G$. In 2008, Chudnovsky, Seymour, and Sullivan
showed that $\beta (G) \le \gamma(G)$, and conjectured that $\beta (G) \le \gamma(G)/2$.
Recently, Dunkum, Hamburger, and P\'or proved that $\beta (G) \le 0.88 \gamma(G)$. In this note,
we prove that  $\beta (G) \le 0.8616 \gamma(G)$.
\end{abstract}

\section{Introduction}

We will follow the notation from \cite{ChudnovskySS, DunkumHP}. All digraphs $G=(V,E)$ considered in this note are finite and simple. A digraph $G$ is called $3$-{\em free} if $G$ has no directed cycle of length at most three. A digraph is {\em acyclic} if it has no directed cycles. For a digraph $G$, let $\beta(G) $ denote the minimum cardinality of a set $X \subset E(G)$ such that $G\setminus X$ is acyclic, and let $\gamma(G)$ be the number of missing edges of $G$ (that is, the number of unordered pairs of nonadjacent vertices.) In 2008, Chudnovsky, Seymour, and Sullivan \cite{ChudnovskySS} made the following conjecture.

\begin{conj} [Chudnovsky, Seymour, and Sullivan] \label{ChudnovskySS}
If $G$ is a $3$-free digraph, then $$\beta (G) \le \frac  1 2 \gamma(G).$$
\end{conj}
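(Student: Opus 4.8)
The plan is to prove the sharp bound by a vertex-elimination scheme that charges the minimum feedback arc set to missing edges at the optimal $2:1$ rate. First I would record the standard reformulation $\beta(G)=\min_\sigma b(\sigma)$, where $b(\sigma)$ is the number of edges $u\to v$ with $u$ placed after $v$ in the linear order $\sigma$: deleting the backward edges of any order makes $G$ acyclic, and conversely any acyclic $G\setminus X$ has a topological order whose backward edges lie in $X$. The elementary building block is that inserting a vertex $v$ at its best position into any order of $G-v$ creates at most $\min(d^+(v),d^-(v))$ new backward edges, since sliding $v$ across the order changes this count by $\pm1$ and runs from $d^-(v)$ (at the far left) to $d^+(v)$ (at the far right), so its minimum is at most $\min(d^+(v),d^-(v))$. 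Iterating over any elimination order $v_n,\dots,v_1$ then gives
\[
\beta(G)\;\le\;\sum_{i}\min\bigl(d^+_{G_i}(v_i),\,d^-_{G_i}(v_i)\bigr),
\]
where $G_i$ is the digraph present when $v_i$ is removed.

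The engine of the improvement is the structural consequence of $3$-freeness that there are no edges from $N^+(v)$ to $N^-(v)$: if $v\to a$ and $b\to v$, then $a\to b$ would close a directed triangle $b\to v\to a\to b$. Hence in an order in which every in-neighbor of $v$ precedes every out-neighbor of $v$, one may insert $v$ in the gap between $N^-(v)$ and $N^+(v)$ at zero cost, and the backward edges we actually pay for correspond to the \emph{inversions}, the places where an out-neighbor of $v$ is forced to precede an in-neighbor. My plan is to build the elimination order so as to minimize these inversions globally and to charge each surviving one to missing edges. The second structural input is that for every edge $u\to v$ the sets $N^+(v)$ and $N^-(u)$ are disjoint and avoid $\{u,v\}$, so that $d^+(v)+d^-(u)\le n-2$; this supplies, for each backward edge, a reservoir of nonadjacent pairs from which to draw the two missing edges needed for a $2:1$ charge.

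Concretely I would remove at each step a vertex that is as \emph{imbalanced} as possible, so that $\min(d^+,d^-)$ is small compared with the number $m(v)=(n_i-1)-d^+(v)-d^-(v)$ of pairs at $v$ missing in $G_i$. Because each missing pair is counted exactly once (when its first endpoint is removed), one has $\sum_i m(v_i)=\gamma(G)$ over the whole elimination, so it would suffice to show that at every stage one can select a vertex with
\[
\min\bigl(d^+(v),d^-(v)\bigr)\;\le\;\tfrac12\,m(v),
\]
after which summation yields $\beta(G)\le\frac12\gamma(G)$. Establishing the existence of such a vertex is where the degree bound $d^+(v)+d^-(u)\le n-2$ and the absence of $N^+\!\to N^-$ edges must be combined with an averaging argument over the vertices of $G_i$.

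The hard part, and the reason the best known constant is still $0.8616$ rather than $\frac12$, is the near-regular balanced regime: when every vertex satisfies $d^+(v)\approx d^-(v)$ with densely packed neighborhoods, the per-vertex inequality $\min(d^+,d^-)\le\frac12 m(v)$ fails locally and the elimination bound degrades to the trivial $\beta(G)\le\frac12|E(G)|$, which exceeds $\frac12\gamma(G)$ at intermediate density. Yet in this very regime the graph is close to transitive and $\beta(G)$ is in fact small, so the greedy one-vertex-at-a-time charge is simply too lossy. Overcoming this gap requires replacing the greedy charge by a global assignment---most plausibly a fractional/LP relaxation or an averaging over a carefully weighted distribution of orders---that charges all backward edges simultaneously to \emph{distinct} pairs of missing edges. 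Controlling the resulting double counting on the conjectured extremal, neighborhood-packed configurations is the crux of the problem, and I expect the bulk of the effort to concentrate there.
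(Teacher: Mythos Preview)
The statement you are addressing is Conjecture~\ref{ChudnovskySS}, which is \emph{open}; the paper does not prove it and makes no such claim. What the paper actually proves is the weaker bound $\beta(G)<0.8616\,\gamma(G)$, via the partition machinery of Dunkum--Hamburger--P\'or (Lemmas~\ref{Lemma3.4} and~\ref{ModifiedLemma3.1}) combined with the polynomial conditions of Theorem~\ref{condition}. There is therefore no ``paper's own proof'' of this statement to compare against.

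Your proposal is not a proof either, and you essentially say so yourself. The scheme reduces $\beta(G)\le\frac12\gamma(G)$ to the existence, at every stage of a vertex-elimination, of a vertex $v$ with $\min(d^+(v),d^-(v))\le\frac12\,m(v)$; you then immediately concede that this per-vertex inequality \emph{fails} in the near-regular balanced regime, and you replace it by an unspecified ``global assignment---most plausibly a fractional/LP relaxation or an averaging over a carefully weighted distribution of orders.'' That is exactly the gap: the only concrete mechanism you propose (greedy vertex-elimination with the local charge $\min(d^+,d^-)\le\frac12 m(v)$) is known not to work, and the proposed replacement is a hope, not an argument. The structural observations you list (no arcs $N^+(v)\to N^-(v)$; the degree bound $d^+(v)+d^-(u)\le n-2$ along an arc $u\to v$) are correct and standard, but they are already what underlies the $\beta(G)\le\gamma(G)$ bound of Chudnovsky--Seymour--Sullivan and the $0.88$ and $0.8616$ improvements; nothing in your outline explains how to push past $0.8616$, let alone reach $\frac12$.

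In short: the conjecture is not proved in the paper, and your proposal identifies the known obstacle without resolving it.
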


In support of the above conjecture,
Chudnovsky, Seymour, and Sullivan \cite{ChudnovskySS} showed that $\beta (G) \le \gamma(G)$. Recently,
Dunkum, Hamburger, and P\'or \cite{DunkumHP} improved the result to $\beta (G) \le 0.88\gamma(G)$.
Conjecture \ref{ChudnovskySS} is closely related to the following special case of a conjecture by Caccetta and H${\rm\ddot{a}}$ggkvist \cite{CH}.

\begin{conj} [Caccetta and H${\rm\ddot{a}}$ggkvist] \label{CH}
Any digraph on $n$ vertices with minimum outdegree at least $n/3$ contains a
directed triangle.
\end{conj}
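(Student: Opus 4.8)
The plan is to argue by contradiction: suppose $G$ is a digraph on $n$ vertices with minimum out-degree at least $n/3$ and no directed triangle, and extract a quantitative contradiction from a double-counting of short directed walks. The starting observation is the local reformulation of triangle-freeness: for every arc $u\to v$ there is no vertex $w$ with $v\to w$ and $w\to u$, so $N^+(v)\cap N^-(u)=\emptyset$ along every arc. First I would record the global consequence. Counting directed paths of length two by their middle vertex gives $\sum_{v} d^-(v)d^+(v)$ such paths, and for fixed endpoints $(u,w)$ the number of two-paths from $u$ to $w$ is the codegree $|N^+(u)\cap N^-(w)|$; whenever this codegree is positive, triangle-freeness forbids the closing arc $w\to u$. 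Hence $\sum_{v} d^-(v)d^+(v)=\sum_{(u,w)}|N^+(u)\cap N^-(w)|$ is supported entirely on the ordered non-arcs, so each missing pair can ``absorb'' only a bounded number of these paths, yielding an inequality that bounds $\sum_v d^-(v)d^+(v)$ against the number of missing ordered pairs, namely $n(n-1)$ minus the number of arcs.

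Second, I would feed in the degree hypothesis. Writing $m=\sum_v d^+(v)=\sum_v d^-(v)$ for the number of arcs, the bound $d^+(v)\ge n/3$ forces $m\ge n^2/3$, and convexity (Cauchy--Schwarz / power-mean) lets me bound $\sum_v d^-(v)d^+(v)$ from below in terms of $m$ and $n$. The aim is to arrange the count so that the lower bound forced by the degree hypothesis strictly exceeds the upper bound imposed by the missing-pair tally, which is the desired contradiction. Carried through with the crude two-path count, this only excludes minimum out-degree above some constant strictly larger than $1/3$; to push the constant down I would iterate the idea on second out-neighborhoods $N^{++}(v)=\bigcup_{w\in N^+(v)}N^+(w)$ and cast the accumulated constraints as a linear (or semidefinite) optimization over the joint degree/codegree distribution, reading off the best achievable constant as the optimum value.

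The hard part---and the reason Conjecture~\ref{CH} has remained open for decades---is the final descent to the \emph{exact} constant $1/3$. The extremal configuration is rigid: the circulant digraph on $\mathbb{Z}_n$ in which each $i$ sends an arc to $i+1,\dots,i+\lceil n/3\rceil-1$ has minimum out-degree $\lceil n/3\rceil-1$ and no directed triangle, so the conjecture, if true, is tight. Every purely averaged count loses a multiplicative slack against this example and stalls near $0.35$ rather than $0.3333$. The genuinely difficult step is therefore a \emph{stability} statement: one must show that any near-counterexample is forced to look essentially like this circulant, and then rule out the boundary case directly. I expect this stability-plus-rigidity phenomenon, rather than any single inequality, to be the true obstacle, and I would attack it by combining the optimization above with a structural analysis of the vertices whose second out-neighborhoods are smallest---precisely the vertices a counterexample can least afford to possess.
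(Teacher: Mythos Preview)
The statement you are addressing is Conjecture~\ref{CH}, the Caccetta--H\"aggkvist conjecture. The paper does \emph{not} prove it; it is stated purely as background and motivation, with the comment that Conjecture~\ref{ChudnovskySS} may provide useful information towards it. There is therefore no ``paper's own proof'' to compare your proposal against.

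Your write-up is not a proof either, and you say so yourself: the double-counting of two-paths and the subsequent optimization over degree/codegree distributions is exactly the circle of ideas that has produced the known partial results (constants in the $0.34$--$0.35$ range, e.g.\ Shen~\cite{Shen} and Hladk\'y--Kr\'al'--Norine~\cite{HladkyKN}), and you correctly identify that these averaged counts stall strictly above $1/3$. The final paragraph is a plan (``stability plus rigidity'') rather than an argument, and no such stability statement is currently known. So the honest assessment is that there is a genuine gap---namely the entire descent from the best known constant to $1/3$---and nothing in the proposal closes it. This is not a flaw in your reasoning so much as a recognition that Conjecture~\ref{CH} is open; but it does mean the proposal should be labelled as a heuristic discussion, not a proof.
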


Chudnovsky, Seymour, and Sullivan \cite{ChudnovskySS} commented that proving Conjecture \ref{ChudnovskySS}
may provide some useful information towards proving Conjecture \ref{CH}. To see this, their partial result
($\beta (G) \le \gamma(G)$) on Conjecture \ref{ChudnovskySS} has been applied by Hamburger, Haxell, and Kostochka \cite{HamburgerHK}
to improve a result of Shen \cite{Shen} on Conjecture \ref{CH}. Recently, the same partial result was also applied by Hladk\'y, Kr\'al', and Norine \cite{HladkyKN} who used the theory of flag algebras to prove the currently best result in this direction, namely, any digraph on $n$ vertices with minimum outdegree at least $0.3465n$ contains a directed triangle.
In this note, we prove that $\beta (G) \le 0.8616\gamma(G)$.

\section{Proof of the Main Result}
In this section, we follow the ideas in \cite{ChudnovskySS, DunkumHP} for partitioning the vertex set of a digraph. For each vertex $v$ in $G$, let
$A(v)$ and $B(v)$ be the set of out-neighbors  and the set of in-neighbors of $G$, respectively.
Then there are no edges from $A(v)$ to $B(v)$; or else, $G$ would contain a directed triangle.
Let $g(v)$ be the number of missing edges between $A(v)$ and $B(v)$.
Denote $C(v):=V-A(v)-B(v)-\{v\}$. Dunkum, Hamburger, and P\'or \cite{DunkumHP} partitioned $V$ into $V_1,$ $V_2,$ $\{v \}$ such that $V_1= B(v) \cup C_{B(v)}$ and $V_2= A(v) \cup C_{A(v)}$, where $C_{A(v)} \cup C_{B(v)}$ forms a certain partition of $C(v)$. Given such a partition $V_1\cup V_2 \cup \{v\}$ of $V$,
let $G[V_1]$ and $G[V_2]$ be the subgraphs induced by $V_1$ and by $V_2$, respectively.
The edges which are missing outside of $G[V_1]$ and $G[V_2]$ are denoted as {\em missing edges}. Note that removing the set of edges from $V_2$ to $V_1$ destroys all directed cycles outside of $G[V_1]$ and $G[V_2]$. Thus the edges
from $V_2$ to $V_1$ are called {\em decycling edges}. An easy induction argument \cite{ChudnovskySS, DunkumHP} shows that, for any real $\mu$ with $ 0 \le \mu \le 1$, if
the number of missing edges is at least $(1+\mu)$ times the number of decycling edges, then $\gamma (G) \ge (1+\mu) \beta(G).$ (See the proof of Theorem~\ref{Main}.)
The following two lemmas are due to Dunkum, Hamburger, and P\'or \cite{DunkumHP}.

\begin{lem}  [\cite{DunkumHP}] \label{Lemma3.4}
If $ \sum_{v \in V(G)}|C(v)|+ \frac 12 \sum_{v \in V(G)} {{|C(v)|} \choose 2} + \frac {1-\mu } 4 \sum_{
v \in V(G)} t(v) \ge \mu  \sum_{v \in V(G)} g(v)$,
then for some vertex $v$ there exists a partition $V_1, V_2, \{v \}$ where
the number of missing edges is at least $(1+\mu)$ times the number of decycling edges.
\end{lem}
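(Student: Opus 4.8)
The plan is to prove the lemma by a first-moment argument applied to all vertices simultaneously. Fix a vertex $v$, and build a random partition of $C(v)$ by assigning each of its vertices independently and uniformly to $C_{A(v)}$ or $C_{B(v)}$; then put $V_1=B(v)\cup C_{B(v)}$ and $V_2=A(v)\cup C_{A(v)}$, and let $m_v$ and $d_v$ be the resulting (random) numbers of missing and decycling edges. Write $e(X\to Y)$ for the number of edges from $X$ to $Y$ and $\overline{e}(X,Y)$ for the number of nonadjacent pairs with one endpoint in each of the disjoint sets $X,Y$. By linearity of expectation,
\[
\mathbb{E}[m_v] = |C(v)| + g(v) + \tfrac12\overline{e}(A(v),C(v)) + \tfrac12\overline{e}(B(v),C(v)) + \tfrac12\overline{e}(C(v),C(v)),
\]
since the $|C(v)|$ pairs through $v$ and the $g(v)$ nonadjacent pairs between $A(v)$ and $B(v)$ are always missing ($B(v)\subseteq V_1$, $A(v)\subseteq V_2$), while every other nonadjacent pair meeting $C(v)$ is separated with probability $\tfrac12$. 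Likewise, as there is no edge from $A(v)$ to $B(v)$,
\[
\mathbb{E}[d_v] = \tfrac12\,e(A(v)\to C(v)) + \tfrac12\,e(C(v)\to B(v)) + \tfrac14\,e\bigl(G[C(v)]\bigr),
\]
an edge inside $C(v)$ being decycling exactly when its tail is placed in $C_{A(v)}$ and its head in $C_{B(v)}$.

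Next I would sum over all $v\in V(G)$ and simplify using two double-counting identities. First,
\[
\sum_{v}g(v) \;=\; \sum_{v}e(A(v)\to C(v)) \;=\; \sum_{v}e(C(v)\to B(v)),
\]
each side counting the directed three-vertex paths with nonadjacent endpoints (marked at the middle, first, and last vertex respectively). Second,
\[
\sum_{v}e\bigl(G[C(v)]\bigr) \;=\; \sum_{v}\overline{e}(A(v),C(v)) \;=\; \sum_{v}\overline{e}(B(v),C(v)),
\]
each side counting the triples $(v,x,y)$ with $x\to y$ an edge and $v$ nonadjacent to both $x,y$; equivalently, $\sum_{uw\in E(G)}$ of the number of vertices nonadjacent to both ends of $uw$. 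Combining these with $\overline{e}(C(v),C(v))=\binom{|C(v)|}{2}-e(G[C(v)])$, all the cross terms among $A(v),B(v),C(v)$ cancel and
\[
\sum_{v}\bigl(\mathbb{E}[m_v]-(1+\mu)\mathbb{E}[d_v]\bigr) \;=\; \sum_{v}|C(v)| + \tfrac12\sum_{v}\binom{|C(v)|}{2} + \tfrac{1-\mu}{4}\sum_{v}t(v) - \mu\sum_{v}g(v),
\]
where $t(v)=e(G[C(v)])$.

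To conclude: the displayed inequality in the lemma says exactly that the right-hand side above is nonnegative, so $\sum_{v}\bigl(\mathbb{E}[m_v]-(1+\mu)\mathbb{E}[d_v]\bigr)\ge 0$. Hence $\mathbb{E}[m_v]-(1+\mu)\mathbb{E}[d_v]\ge 0$ for at least one vertex $v$, and then some outcome of the random partition of that $C(v)$ produces a partition $V_1,V_2,\{v\}$ with $m_v\ge (1+\mu)d_v$, as required.

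The step I expect to be the main obstacle is the global simplification: one must account for every type of nonadjacent pair and every type of edge among $A(v),B(v),C(v)$ so that the cross terms really do cancel. The crucial, least obvious ingredient is the identity $\sum_{v}e(G[C(v)])=\sum_{v}\overline{e}(A(v),C(v))=\sum_{v}\overline{e}(B(v),C(v))$; it is precisely this that forces the residual term to be $\tfrac{1-\mu}{4}\sum_{v}t(v)$ and matches the form stated in the lemma.
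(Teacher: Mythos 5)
Your proof is correct. Note that the paper itself gives no proof of this lemma --- it is quoted from \cite{DunkumHP} --- so there is nothing to compare line by line; but your argument is a valid, self-contained derivation by the standard first-moment method. I checked the two places where such an argument could go wrong and both are fine: (i) the expectation of the number of missing (resp.\ decycling) edges correctly accounts for every type of nonadjacent pair (resp.\ edge) among $A(v)$, $B(v)$, $C(v)$, $\{v\}$, using that $A(v)$, $B(v)$, $C(v)$ partition $V\setminus\{v\}$ (no digons) and that there are no edges from $A(v)$ to $B(v)$; and (ii) the two double-counting identities $\sum_v g(v)=\sum_v e(A(v)\to C(v))=\sum_v e(C(v)\to B(v))$ and $\sum_v t(v)=\sum_v \overline{e}(A(v),C(v))=\sum_v \overline{e}(B(v),C(v))$ hold, each side counting the same set of marked configurations (directed $2$-paths with nonadjacent endpoints, and edge--nonneighbor triples, respectively). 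With $\overline{e}(C(v),C(v))=\binom{|C(v)|}{2}-t(v)$ the coefficient of $\sum_v t(v)$ collapses to $\tfrac12-\tfrac{1+\mu}{4}=\tfrac{1-\mu}{4}$, exactly matching the hypothesis, and the conclusion follows by choosing a vertex with $\mathbb{E}[m_v-(1+\mu)d_v]\ge 0$ and then an outcome achieving at least the mean. One cosmetic remark: the deterministic ``majority'' partition used elsewhere in the paper (placing $u\in V_1$ iff $l_v(u)>k_v(u)$) does at least as well as your uniform random split in expectation, so the same computation also justifies that specific choice; your randomized version is the cleaner way to get the existence statement asked for here.
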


\begin{lem} [\cite{DunkumHP}] \label{Lemma3.1} If $$g(v) \ge |C(v)|^2 (1+\mu) \left ( \frac {1+\mu + \sqrt{ (1+\mu)^2 + 1+\mu}} 2 + \frac 1 4 \right )$$
for a
vertex $v$, then there exists a partition $V_1$, $V_2$, $\{v \}$ where the number of
missing edges is at least $(1+\mu)$ times the number of decycling edges.
\end{lem}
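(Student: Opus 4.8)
The plan is to prove the lemma by a single-vertex argument. Fix a vertex $v$ for which the displayed inequality holds, write $A=A(v)$, $B=B(v)$, $C=C(v)$, $g=g(v)$ and $\lambda=1+\mu$, and produce an explicit choice of $C_{A}\subseteq C$ and $C_{B}=C\setminus C_{A}$ (hence of $V_{1}=B\cup C_{B}$, $V_{2}=A\cup C_{A}$ and $\{v\}$) for which the number of missing edges is at least $\lambda$ times the number of decycling edges.

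First I would record what $3$-freeness buys. Since $v\to a\to b\to v$ would be a directed triangle, there is no edge from $A$ to $B$; in particular $g$ is exactly the number of non-adjacent pairs with one endpoint in $A$ and one in $B$. More subtly, fix $c\in C$, set $A_{c}=\{a\in A:\ a\to c\}$, $B_{c}=\{b\in B:\ c\to b\}$, $x(c)=|A_{c}|$, $y(c)=|B_{c}|$. For $a\in A_{c}$ and $b\in B_{c}$ the edge $b\to a$ is forbidden, as it would complete the triangle $a\to c\to b\to a$; together with the absence of edges from $A$ to $B$ this makes every pair $\{a,b\}\in A_{c}\times B_{c}$ non-adjacent, and these are $x(c)y(c)$ distinct pairs among the $g$ non-adjacent pairs between $A$ and $B$. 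Hence the key bound $x(c)\,y(c)\le g$ holds for every $c\in C$.

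Next I would choose the partition greedily to hold down the decycling edges: put $c$ into $C_{B}$ when $x(c)\le y(c)$, and into $C_{A}$ otherwise. The decycling edges are the edges from $V_{2}$ to $V_{1}$, namely those from $A$ to $C_{B}$, from $C_{A}$ to $B$, and from $C_{A}$ to $C_{B}$ (there are none from $A$ to $B$). The first two groups contribute $\sum_{c\in C_{B}}x(c)+\sum_{c\in C_{A}}y(c)=\sum_{c\in C}\min(x(c),y(c))\le\sum_{c\in C}\sqrt{x(c)y(c)}\le|C|\sqrt{g}$, using $x(c)y(c)\le g$; the third contributes at most $|C_{A}|\,|C_{B}|\le|C|^{2}/4$. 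Thus the number of decycling edges is at most $|C|\sqrt{g}+|C|^{2}/4$. On the other side, all $g$ non-adjacent pairs between $A$ and $B$ lie across $V_{1}$ and $V_{2}$, so the number of missing edges is at least $g$. It therefore suffices to show that the hypothesis forces $g\ge\lambda\bigl(|C|\sqrt{g}+|C|^{2}/4\bigr)$.

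The last step is the routine optimization that pins the constant. With $s=\sqrt{g}$ the target inequality is $s^{2}-\lambda|C|\,s-\tfrac14\lambda|C|^{2}\ge0$, whose larger root in $s$ is $|C|\,u$ with $u=\tfrac12\bigl(\lambda+\sqrt{\lambda^{2}+\lambda}\bigr)$. Since $u$ solves $u^{2}=\lambda u+\tfrac14\lambda$, one has $u^{2}=\lambda\bigl(u+\tfrac14\bigr)$, so the inequality holds exactly when $g=s^{2}\ge|C|^{2}u^{2}=|C|^{2}(1+\mu)\bigl(\tfrac{(1+\mu)+\sqrt{(1+\mu)^{2}+(1+\mu)}}{2}+\tfrac14\bigr)$, which is precisely the hypothesis. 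I expect the only non-routine ingredient to be spotting the structural inequality $x(c)y(c)\le g$ produced by the forbidden triangle $a\to c\to b\to a$; I also expect the within-$C$ decycling bound $|C_{A}|\,|C_{B}|\le|C|^{2}/4$ to be what is responsible for the extra $\tfrac14$ in the stated constant, so it should be kept separate from the $|C|\sqrt{g}$ term rather than absorbed into it.
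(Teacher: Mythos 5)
Your proof is correct and follows essentially the same route as the paper's proof of the closely related Lemma~\ref{ModifiedLemma3.1}: the same assignment of each $c\in C(v)$ to the side minimizing $\min(k_v(c),l_v(c))$, the same forbidden-edge observation between $A_c$ and $B_c$ yielding $M\le |C(v)|\sqrt{g(v)}$, the same count of decycling edges as $M+e(v)$ with $e(v)\le |C(v)|^2/4$, and the same quadratic optimization pinning the constant. Your two small streamlinings --- using the pointwise bound $x(c)y(c)\le g(v)$ in place of the paper's averaged count $\sum_{c}k_v(c)l_v(c)\le |C(v)|\,g(v)$ followed by Cauchy--Schwarz, and replacing the paper's case split on $M$ by solving the quadratic directly in $\sqrt{g(v)}$ --- are both valid and do not change the substance of the argument.
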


Let $e(v)$ be the number edges from $C_{A(v)}$ to $C_{B(v)}$. The next lemma is a modification of Lemma~\ref{Lemma3.1}.
The proof of Lemma~\ref{ModifiedLemma3.1} is quite similar
to the proof of Lemma~\ref{Lemma3.1} in \cite{DunkumHP}. To make the note self-contained, we include a proof.


\begin{lem} \label{ModifiedLemma3.1}  If $$g(v) \ge |C(v)|^2 (1+\mu) \left (
\frac {1+\mu + \sqrt{ (1+\mu)^2 + \frac { 4(1+\mu) e(v)} {|C(v)|^2}}} 2 + \frac {e(v)} {|C(v)|^2} \right )$$ for a
vertex $v$, then there exists a partition $V_1$, $V_2$, $\{v \}$ where the number of
missing edges is at least $(1+\mu)$ times the number of decycling edges.
\end{lem}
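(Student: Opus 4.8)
The plan is to exhibit the required partition explicitly, following the proof of Lemma~\ref{Lemma3.1} but keeping the edges inside $C(v)$ as a separate term rather than bounding them crudely. For each $u\in C(v)$ let $p(u)$ be the number of edges from $A(v)$ to $u$ and let $q(u)$ be the number of edges from $u$ to $B(v)$. Partition $C(v)$ greedily: put $u$ into $C_{B(v)}$ when $p(u)\le q(u)$ and into $C_{A(v)}$ otherwise, and set $V_1=B(v)\cup C_{B(v)}$, $V_2=A(v)\cup C_{A(v)}$; let $e(v)$ be the number of edges from $C_{A(v)}$ to $C_{B(v)}$ for this partition. The decycling edges are those from $V_2$ to $V_1$, and they split into those from $A(v)$ to $B(v)$ (none, since $G$ is $3$-free), from $A(v)$ to $C_{B(v)}$, from $C_{A(v)}$ to $B(v)$, and from $C_{A(v)}$ to $C_{B(v)}$. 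Grouping each such edge by the vertex of $C(v)$ it meets, the number of decycling edges equals
\[
\sum_{u\in C_{B(v)}}p(u)+\sum_{u\in C_{A(v)}}q(u)+e(v)\;=\;\sum_{u\in C(v)}\min\{p(u),q(u)\}+e(v),
\]
where the last equality is the whole point of the greedy rule.

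The crux is a pointwise inequality forced by $3$-freeness: for every $u\in C(v)$,
\[
p(u)\,q(u)\;\le\;g(v).
\]
Indeed, if $a\in A(v)$ with an edge $a\to u$ and $b\in B(v)$ with an edge $u\to b$, then $a$ and $b$ are nonadjacent in $G$: an edge $a\to b$ would close the triangle $v\to a\to b\to v$, and an edge $b\to a$ would close the triangle $a\to u\to b\to a$. Since $A(v)$ and $B(v)$ are disjoint, distinct pairs $(a,b)$ yield distinct unordered pairs $\{a,b\}$, so these $p(u)q(u)$ nonadjacent pairs are among the $g(v)$ missing pairs between $A(v)$ and $B(v)$. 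Therefore $\min\{p(u),q(u)\}\le\sqrt{p(u)q(u)}\le\sqrt{g(v)}$ for each $u$, and summing over the $|C(v)|$ vertices of $C(v)$ gives that the partition has at most $|C(v)|\sqrt{g(v)}+e(v)$ decycling edges.

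For the missing side it is enough to use the $g(v)$ nonadjacent pairs between $A(v)$ and $B(v)$: each such pair has one endpoint in $A(v)\subseteq V_2$ and the other in $B(v)\subseteq V_1$, hence lies in neither $G[V_1]$ nor $G[V_2]$ and is a missing edge of the partition, so the partition has at least $g(v)$ missing edges. Consequently it suffices to check
\[
g(v)\;\ge\;(1+\mu)\bigl(|C(v)|\sqrt{g(v)}+e(v)\bigr),
\]
and this is precisely the hypothesis of the lemma: treating the displayed inequality of the statement as a quadratic inequality in $\sqrt{g(v)}$, its relevant (non-negative) root is $\tfrac12\bigl((1+\mu)|C(v)|+\sqrt{(1+\mu)^2|C(v)|^2+4(1+\mu)e(v)}\bigr)$, and clearing the square root turns $\sqrt{g(v)}\ge$ (that root) back into $g(v)\ge(1+\mu)\bigl(|C(v)|\sqrt{g(v)}+e(v)\bigr)$. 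I expect the only delicate points to be the edge bookkeeping — confirming that the four types of decycling edge are counted exactly once and that the greedy rule genuinely produces $\min\{p(u),q(u)\}$ — together with the routine algebraic identification of that root with the radical in the statement; the one new ingredient beyond Lemma~\ref{Lemma3.1} is merely to carry $e(v)$ through rather than replacing it by $|C(v)|^2/4$.
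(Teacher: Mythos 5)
Your proof is correct and follows essentially the same route as the paper: the same greedy partition of $C(v)$, the same triangle-freeness argument showing the pairs between $K_v(u)$ and $L_v(u)$ are missing, and the same count $M+e(v)$ of decycling edges. The only (harmless) differences are that you bound $g(v)\ge p(u)q(u)$ pointwise rather than averaging $\sum_u k_v(u)l_v(u)$ with multiplicity $|C(v)|$, and you solve the quadratic in $\sqrt{g(v)}$ directly instead of splitting into cases on $M$; both yield the same inequality $M\le |C(v)|\sqrt{g(v)}$ and the same conclusion.
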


\begin{proof} Following the ideas in \cite{DunkumHP}, we partition
the vertex set of $G$ into $V_1$, $V_2$, $\{v\}$ as follows. First let $B(v) \subseteq V_1$
and $A(v) \subseteq V_2$. Second, for any $u \in C(v)$, let $k_v(u)$ (resp.~$l_v(u)$) be the number of vertices $w \in A(v)$ (resp.~$w \in B(v)$) with $wu \in E(G)$
(resp.~$uw \in E(G)$), and further let $u \in V_1$ if $l_v(u) > k_v (u)$ and let $u \in V_2$ otherwise.
Denote the two subsets of $C(v)$ by $C_{A(v)}$ and $C_{B(v)}$; that is,
$C_{A(v)} = C(v) \cap V_2$ and $C_{B(v)} = C(v) \cap V_1$. Denote $m_v(u): =\min \{ k_v(u), l_v(u)\}$
and $M:= \sum_{v \in C(v)} m_v(u)$.

\vspace{.3cm}

For each $u \in C(v)$, there are $k_v(u)$ and $l_v(u)$ edges from $A(v)$ to $v$ and from $v$ to $B(v)$, respectively. Denote the two sets by $K_v(u) \subseteq A(v)$ and $L_v (u) \subseteq B(v)$. Any edge from $L_v(u)$ to $K_v(u)$ would form a directed triangle together with $v$. Thus these $k_v(u)l_v(u)$ edges between $K_v(u)$ and $L_v(u)$ are missing. Each missing edge between $A(v)$ and $B(v)$ can be counted with multiplicity at most $|C(v)|$ in the sum $\sum_{u \in C(v)} k_v(u)l_v(u)$. This yields a lower bound for the number of missing edges $g(v)$ between  $A(v)$ and $B(v)$:
\begin{equation} \label{g(v)}
g(v) \ge \frac 1 {|C(v)|} \sum_{u \in C(v)} k_v(u)l_v(u) \ge \frac 1 {|C(v)|} \sum_{u \in C(v)} m_v^2(u)
\ge \left ( \frac {\sum_{u \in C(v)} m_v(u)}{|C(v)|} \right )^2 = \frac {M^2}{|C(v)|^2}.
\end{equation}
To count the number of decycling edges, we see that there are three types of decycling edges: edges from $A(v)$ to $C_{B(v)}$, edges from $C_{A(v)}$ to $B(v)$, and edges from $C_{A(v)}$ to $C_{B(v)}$. The number of decycling edges of the first two types is $M$. Recall that $e(v)$ is the number of edges from $C_{A(v)}$ to $C_{B(v)}$. So the total number of decycling edges is $M + e(v)$. If
$$M \le \frac {1+\mu + \sqrt{ (1+\mu)^2 + \frac { 4(1+\mu) e(v)} {|C(v)|^2}}} 2 |C(v)|^2,$$
then
$$g(v) \ge  |C(v)|^2 (1+\mu) \left (
\frac {1+\mu + \sqrt{ (1+\mu)^2 + \frac { 4(1+\mu) e(v)} {|C(v)|^2}}} 2 + \frac {e(v)} {|C(v)|^2} \right ) \ge (1+\mu) (M + e(v))$$
and we are done.
Now we may suppose
$$M \ge \frac {1+\mu + \sqrt{ (1+\mu)^2 + \frac { 4(1+\mu) e(v)} {|C(v)|^2}}} 2 |C(v)|^2,$$
which implies
\begin{equation} \label{quadratic}
\frac {M^2}{|C(v)|^4} - \frac {(1+\mu) M}{|C(v)|^2} - \frac {(1+\mu)e(v)}{|C(v)|^2} \ge 0.
\end{equation}
By (\ref{g(v)}) and (\ref{quadratic}),
$$g(v) \ge \frac {M^2}{|C(v)|^2} \ge (1+\mu) (M + e(v)),$$
from which Lemma~\ref{ModifiedLemma3.1} follows.
\hspace*{\fill}$\Box$
\end{proof}

\begin{thm} \label{condition} Let $\mu$ be a positive real satisfying the four inequalities:
\begin{itemize}
\item[(I)] \hspace{.4cm} $4\mu^2 + 5 \mu -1 \le 0$,
\item[(II)] \hspace{.4cm} $24\mu^4 + 49 \mu^3 + 8 \mu^2 -19 \mu +2 \le 0,$
\item[(III)] \hspace{.4cm} $8\mu^3+20\mu^2+13\mu-5 \le 0,$ and
\item[(IV)] \hspace{.4cm} $32\mu^4 - 8 \mu^3 -159 \mu^2 -130 \mu +25 \ge 0.$
\end{itemize}
Then there exists
a vertex $v$ and a partition $V_1$, $V_2$, $\{v \}$ where the number of
missing edges is at least $(1+\mu)$ times the number of decycling edges.
\end{thm}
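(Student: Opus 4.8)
The plan is to argue by contradiction. Suppose that for every vertex $v$ and every partition $V_1,V_2,\{v\}$ of $V(G)$ the number of missing edges is strictly less than $(1+\mu)$ times the number of decycling edges. Then the contrapositive of Lemma~\ref{Lemma3.4} yields
\[\mu\sum_{v}g(v)\;>\;\sum_{v}|C(v)|+\tfrac12\sum_{v}\binom{|C(v)|}{2}+\tfrac{1-\mu}{4}\sum_{v}t(v),\]
while the contrapositive of Lemma~\ref{ModifiedLemma3.1} yields, for every $v$,
\[g(v)\;<\;|C(v)|^2(1+\mu)\left(\frac{1+\mu+\sqrt{(1+\mu)^2+4(1+\mu)r_v}}{2}+r_v\right)=:|C(v)|^2\,\phi(r_v),\qquad r_v:=\frac{e(v)}{|C(v)|^2}.\]
Since $C(v)=C_{A(v)}\cup C_{B(v)}$ one has $e(v)\le|C_{A(v)}|\,|C_{B(v)}|\le|C(v)|^2/4$, so $r_v\in[0,1/4]$, and $t(v)\ge e(v)$ because the edges counted by $e(v)$ lie inside $G[C(v)]$. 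To contradict the first display it therefore suffices to prove the per-vertex inequality $\mu|C(v)|^2\phi(r_v)\le|C(v)|+\tfrac12\binom{|C(v)|}{2}+\tfrac{1-\mu}{4}t(v)$ for all $v$ and sum.

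Using $t(v)\ge e(v)=r_v|C(v)|^2$ and $\binom{|C(v)|}{2}=\tfrac12|C(v)|(|C(v)|-1)$, the right-hand side of the per-vertex inequality is at least $|C(v)|^2\bigl(\tfrac14+\tfrac{1-\mu}{4}r_v\bigr)$ (discarding the nonnegative leftover $\tfrac34|C(v)|$). So the whole theorem collapses to the one-variable statement
\[h(r):=\tfrac14+\tfrac{1-\mu}{4}\,r-\mu\,\phi(r)\;\ge\;0\qquad\text{for all }r\in[0,1/4].\]
Write $h(r)=Q(r)-S(r)$, where $Q(r)=\tfrac14-\tfrac{\mu(1+\mu)^2}{2}+r\cdot\tfrac{1-5\mu-4\mu^2}{4}$ is the affine part and $S(r)=\tfrac{\mu(1+\mu)}{2}\sqrt{(1+\mu)^2+4(1+\mu)r}$ is the concave, increasing square-root term; hence $h$ is convex on $[0,1/4]$.

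The four hypotheses enter here, one apiece. Hypothesis (I) makes the slope $\tfrac{1-5\mu-4\mu^2}{4}$ of $Q$ nonnegative and keeps $Q(0)\ge0$, so $Q\ge0$ throughout $[0,1/4]$. Hypothesis (II) is exactly what one obtains by rearranging and squaring the inequality $h'(1/4)\le0$; since $h$ is convex, this forces $h$ to be nonincreasing on $[0,1/4]$, so $\min_{[0,1/4]}h=h(1/4)$. Hypothesis (III) says precisely that $Q(1/4)=-\tfrac1{16}\bigl(8\mu^3+20\mu^2+13\mu-5\bigr)\ge0$, which is what legitimizes squaring the residual inequality $S(1/4)\le Q(1/4)$. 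And hypothesis (IV) is exactly $Q(1/4)^2\ge S(1/4)^2$ after the expansion. Chaining these four facts gives $h(1/4)\ge0$, hence $h\ge0$ on $[0,1/4]$, which is the sought contradiction.

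The main obstacle is precisely this last chain. One has to isolate the single square root before squaring, keep track of the signs of both sides so that squaring is a genuine equivalence rather than a one-way implication, and verify that the convexity of $h$ really does reduce the interval statement to the single endpoint $r=1/4$ under (I) and (II). The polynomial manipulations that produce (II), (III), (IV) are lengthy but mechanical; what is delicate is that (IV) is essentially an equality at the optimal $\mu$, so the argument has to be run with no slack to spare. It is (IV) that pins $\mu$ down near $0.1606$, and hence, via the induction argument recalled just before Lemma~\ref{Lemma3.4}, gives $\beta(G)\le\gamma(G)/(1+\mu)\le0.8616\,\gamma(G)$.
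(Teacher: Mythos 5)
Your proposal is correct and follows essentially the same route as the paper: contradiction via Lemmas~\ref{Lemma3.4} and~\ref{ModifiedLemma3.1}, reduction (using $e(v)\le t(v)\,$ and $e(v)\le |C(v)|^2/4$) to a one-variable inequality on $[0,1/4]$, monotonicity to push the extremum to the endpoint $r=1/4$ via (I) and (II), and the squaring steps that turn the endpoint inequality into (III) and (IV). Your repackaging --- summing the per-vertex bound over all vertices instead of averaging to a single bad vertex, and collapsing the paper's two cases into the single convex function $h$ --- is only a cosmetic reorganization of the same computation.
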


\begin{proof} By Lemmas~\ref{Lemma3.4}, we may assume that
$$
\sum_{v \in V(G)}|C(v)|+ \frac 12 \sum_{v \in V(G)} {{|C(v)|} \choose 2} + \frac {1-\mu } 4 \sum_{
v \in V(G)} t(v) < \mu  \sum_{v \in V(G)} g(v).
$$
Thus
$$ \frac 14 \sum_{v \in V(G)}|C(v)|^2 + \frac {1-\mu } 4 \sum_{
v \in V(G)} t(v) < \mu  \sum_{v \in V(G)} g(v),$$
which implies that there exists some vertex $v$ such that
\begin{equation} \label{LowerBound}
\frac 14 |C(v)|^2 + \frac {1-\mu } 4 t(v) < \mu  g(v).
\end{equation}
By Lemmas~\ref{ModifiedLemma3.1}, we may also assume that
\begin{equation} \label{UpperBound}
g(v) < |C(v)|^2 (1+\mu) \left (
\frac {1+\mu + \sqrt{ (1+\mu)^2 + \frac { 4(1+\mu) e(v)} {|C(v)|^2}}} 2 + \frac {e(v)} {|C(v)|^2} \right )
\end{equation}
Combining (\ref{LowerBound}) with (\ref{UpperBound}),
$$\frac 14 |C(v)|^2 + \frac {1-\mu } 4 t(v) < |C(v)|^2 \mu (1+\mu) \left (
\frac {1+\mu + \sqrt{ (1+\mu)^2 + \frac { 4(1+\mu) e(v)} {|C(v)|^2}}} 2 + \frac {e(v)} {|C(v)|^2} \right ).$$
Since $e (v) \le t(v)$, we obtain
\begin{equation} \label{MainInequality}
\frac 14 < \mu (1+\mu)
\frac {1+\mu + \sqrt{ (1+\mu)^2 + \frac { 4(1+\mu) e(v)} {|C(v)|^2}}} 2+ \frac { 4  \mu ^2 + 5 \mu -1} 4 \cdot \frac {t(v)}
{|C(v)|^2}.
\end{equation}
The proof is now broken into two cases:

Case 1: $t(v) \ge |C(v)|^2/4$. Recall that $4\mu^2 + 5 \mu -1 \le 0$. Since $$e (v) \le |C_{A(v)}|\cdot |C_{B(v)}|
 = |C_{A(v)}|\cdot (|C(v)|- |C_{A(v)}|) \le  |C(v)|^2/4,$$
(\ref{MainInequality}) implies that
\begin{equation} \label{condition3}
\frac 14 < \mu (1+\mu)
\frac {1+\mu + \sqrt{ (1+\mu)^2 + 1+\mu } } 2+ \frac { 4  \mu ^2 + 5 \mu -1} {16}.
\end{equation}

Case 2: $t(v) \le |C(v)|^2/4$. Since $e (v) \le t(v)$,
(\ref{MainInequality}) implies that
$$\frac 14 < \mu (1+\mu)
\frac {1+\mu + \sqrt{ (1+\mu)^2 + \frac { 4(1+\mu) t(v)} {|C(v)|^2}}} 2+ \frac { 4  \mu ^2 + 5 \mu -1} 4 \cdot \frac {t(v)}
{|C(v)|^2} .$$
Define
$$f(x) = \mu (1+\mu)
\frac {1+\mu + \sqrt{ (1+\mu)^2 + 4(1+\mu) x }} 2+ \frac { (4  \mu ^2 + 5 \mu -1)x } 4,$$
where $0 \le x = t(v) /|C(v)|^2 \le 1/4$.
Taking the derivative of $f(x)$,
$$f^ \prime (x) = \frac {\mu (1+ \mu)^2}{\sqrt{ (1+\mu)^2 + 4(1+\mu) x }} + \frac { 4  \mu ^2 + 5 \mu -1} 4 \ge
 \frac {\mu (1+ \mu)^2}{\sqrt{ (1+\mu)^2 + 1+\mu  }} + \frac { 4  \mu ^2 + 5 \mu -1} 4.$$
It is easy to check that when $4  \mu ^2 + 5 \mu -1 \le 0$ we have
$$ \frac {\mu (1+ \mu)^2}{\sqrt{ (1+\mu)^2 + 1+\mu  }} + \frac { 4  \mu ^2 + 5 \mu -1} 4 \ge 0
\mbox{ iff } 24\mu^4 + 49 \mu^3 + 8 \mu^2 -19 \mu +2 \le 0.$$
Thus $f^\prime (x) \ge 0$, which implies that $f(x)$ is increasing. Thus
$$\frac 14 < f(x)  \le f\left (\frac 14 \right ) = \mu (1+\mu)
\frac {1+\mu + \sqrt{ (1+\mu)^2 + 1+\mu } } 2+ \frac { 4  \mu ^2 + 5 \mu -1} {16}.$$
By combining the above two cases, we always have (\ref{condition3}). Furthermore it is easy to check that,
when $8\mu^3+20\mu^2+13\mu-5 \le 0$,
(\ref{condition3}) is equivalent to $32\mu^4 - 8 \mu^3 -159 \mu^2 -130 \mu +25 <0, $ a contradiction. \hspace*{\fill}$\Box$
\end{proof}

\begin{thm} \label{Main}
If $G$ is a $3$-free digraph, then $\beta(G) < 0.8616 \gamma (G).$
\end{thm}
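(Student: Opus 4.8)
The plan is to extract from Theorem~\ref{condition} an explicit admissible value of $\mu$ and then run the standard induction that converts a local partition statement into a global bound on $\beta(G)$ versus $\gamma(G)$. First I would check that the four polynomial inequalities (I)--(IV) are simultaneously satisfiable on an interval of positive reals with right endpoint a little above $0.16$; concretely, I expect that taking $\mu = 0.1616$ (so that $1+\mu = 1.1616$ and $1/(1+\mu) > 0.8611$) satisfies all four. Inequality (I), $4\mu^2+5\mu-1\le 0$, holds for $\mu \le (\sqrt{41}-5)/8 \approx 0.1754$, so $0.1616$ is comfortably inside. For (II), (III), (IV) one just evaluates the quartic/cubic polynomials at $\mu = 0.1616$ and verifies the stated signs; these are finite arithmetic checks, and I would present them as a short displayed computation rather than grinding through them in prose. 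The one to watch is (IV), since it is the inequality whose reversal produced the contradiction in the proof of Theorem~\ref{condition}, so the margin there is presumably the binding constraint that pins down how close to $1/6$ we can push $\mu$.

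Next I would set up the induction on $|V(G)|$ to prove $\gamma(G) \ge (1+\mu)\beta(G)$ for every $3$-free digraph $G$, with $\mu$ as above. The base case (no vertices, or acyclic $G$) is trivial since then $\beta(G) = 0$. For the inductive step, apply Theorem~\ref{condition} to obtain a vertex $v$ and a partition $V = V_1 \cup V_2 \cup \{v\}$ in which the number of missing edges $m$ (pairs nonadjacent in $G$ but lying neither within $G[V_1]$ nor within $G[V_2]$) is at least $(1+\mu)$ times the number of decycling edges $d$ (edges from $V_2$ to $V_1$). Since $G[V_1]$ and $G[V_2]$ are themselves $3$-free and smaller, the induction hypothesis gives $\gamma(G[V_i]) \ge (1+\mu)\beta(G[V_i])$ for $i=1,2$. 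Now $\gamma(G) = \gamma(G[V_1]) + \gamma(G[V_2]) + m$ (every missing pair of $G$ is accounted for exactly once: either inside one part, or counted among the missing edges, noting that all pairs involving $v$ that are missing also fall into the ``missing edges'' bucket or are handled by the partition bookkeeping of \cite{DunkumHP}), while $\beta(G) \le \beta(G[V_1]) + \beta(G[V_2]) + d$, because deleting the $d$ decycling edges together with optimal decycling sets inside $G[V_1]$ and $G[V_2]$ destroys every directed cycle of $G$ (any cycle not contained in a single $V_i$ must use an edge from $V_2$ to $V_1$, hence a decycling edge). Combining these three facts with $m \ge (1+\mu)d$ yields
$$
\gamma(G) \ge (1+\mu)\beta(G[V_1]) + (1+\mu)\beta(G[V_2]) + (1+\mu)d \ge (1+\mu)\bigl(\beta(G[V_1]) + \beta(G[V_2]) + d\bigr) \ge (1+\mu)\beta(G),
$$
which closes the induction.

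Finally I would convert $\gamma(G) \ge (1+\mu)\beta(G)$ into the numerical statement: with $\mu = 0.1616$ we get $\beta(G) \le \gamma(G)/(1+\mu) = \gamma(G)/1.1616 < 0.8616\,\gamma(G)$, since $1/1.1616 = 0.86088\ldots < 0.8616$. I expect the main obstacle to be purely computational rather than conceptual, namely verifying that the particular choice of $\mu$ clears inequality (IV) — the quartic $32\mu^4 - 8\mu^3 - 159\mu^2 - 130\mu + 25 \ge 0$ has its relevant root only slightly above $0.1616$, so one must be careful with the decimal arithmetic; if $0.1616$ turned out to violate (IV) by a hair I would instead solve (IV) exactly, locate its largest root $\mu_0$ in $(0,1/6)$, check that $\mu_0$ also satisfies (I)--(III) (which it should, as those are far less restrictive near $1/6$), and then take any rational $\mu \le \mu_0$ large enough that $1/(1+\mu) < 0.8616$, i.e.\ $\mu > 1/0.8616 - 1 = 0.16063\ldots$. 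Everything else — the monotonicity bookkeeping, the induction, the decomposition of $\gamma$ and $\beta$ across the partition — is routine given Lemmas~\ref{Lemma3.4}--\ref{ModifiedLemma3.1} and Theorem~\ref{condition}.
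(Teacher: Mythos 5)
Your proposal follows the paper's proof exactly: induction on $|V(G)|$, invoking Theorem~\ref{condition} to get the partition, then $\gamma(G)=\gamma(G[V_1])+\gamma(G[V_2])+m$ and $\beta(G)\le\beta(G[V_1])+\beta(G[V_2])+d$ combined with $m\ge(1+\mu)d$. Your primary guess $\mu=0.1616$ does in fact violate (IV) (the quartic $32\mu^4-8\mu^3-159\mu^2-130\mu+25$ evaluates to about $-0.172$ there), so the contingency you describe is the operative branch, and it lands precisely where the paper does: the paper takes $\mu=0.16065$, for which the quartic is $\approx +0.0001$ and $1/(1+\mu)\approx 0.86159<0.8616$, i.e.\ a point in the narrow window $(0.16063\ldots,\ \mu_0\approx 0.160650]$ that you correctly identify.
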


\begin{proof} We prove the theorem by induction on the number of vertices of $G$.
Set $\mu =0.16065$. Then $\mu$ satisfies all four inequalities in Theorem~\ref{condition}, and thus
there exists a vertex $v$ and a partition $V_1$, $V_2$, $\{v \}$ where the number of
missing edges, denoted $\rho$, is at least $(1+\mu)$ times the number of decycling edges, denoted $\tau$.
By induction hypothesis, $\beta (G[V_1]) < 0.8616 \gamma (G[V_1]) $ and $\beta (G[V_2]) < 0.8616 \gamma (G[V_2]) $.
Putting all these together yields
$$\hspace{.5cm} \beta(G) \le \gamma (G[V_1])  + \gamma (G[V_2]) + \tau < 0.8616 \gamma (G[V_1]) + 0.8616 \gamma (G[V_2])  + \rho / (1+\mu)
\le 0.8616 \gamma (G). \hspace{.5cm} \Box $$
\end{proof}

\vspace{.2cm}

{\bf Acknowledgement.} This is part of a research project done by three high school students (Chen, Karson, and Liu) in the summer of 2009
under the supervision of
Dr.~Jian Shen at Texas State University. Chen, Karson, and Liu thank Texas State Math Camp for providing this research opportunity. Shen wants to thank Professor Peter Hamburger for providing reference~\cite{DunkumHP}.

\bibliographystyle{amsplain}

\end{document}